\documentclass[twoside,a4paper]{amsart}
\usepackage{style}
\usepackage{mathabx}
\usepackage{tikz}
\usepackage{stmaryrd}
\usepackage{graphicx}

\usepackage[left=3cm, right=3cm]{geometry}

\usepackage{xcolor}
\usepackage{pdfsync}
\usepackage{xcolor}
\definecolor{webgreen}{rgb}{0,.5,0}
\definecolor{webbrown}{rgb}{.6,0,0}
\definecolor{RoyalBlue}{cmyk}{1, 0.50, 0, 0}
\usepackage[colorlinks=true, breaklinks=true, urlcolor=webbrown, linkcolor=RoyalBlue, citecolor=webgreen,backref=page]{hyperref}

\usepackage{courier} 
\normalfont
\usepackage{epsfig, graphicx, subfigure}
\usepackage{amsmath, amssymb}

\def\ge{\geqslant}
\def\le{\leqslant}

\newtheorem{theorem}{Theorem}[section]

\newtheorem{corollary}[theorem]{Corollary}
\newtheorem{lemma}[theorem]{Lemma}

\theoremstyle{remark}

\definecolor{darkbrown}{RGB}{150,1,33} 
\numberwithin{equation}{section}
\usepackage{hyperref}
\begin{document}

\title[Convergence  \ldots \ldots]{
Convergence of sparse square-summable NLFT}

\begin{abstract}  We study convergence of SU(1,1) and SU(2) NLFT with sparse $\ell^2(\Z)$ data. The asymptotics of the associated  polynomials orthogonal on the unit circle is obtained as a corollary. 
\end{abstract} \vspace{1cm}

\author[Sergey A. Denisov]{Sergey A. Denisov}
\address{Department of Mathematics, University of Wisconsin-Madison, 480 Lincoln Dr., Madison, WI 53706, USA}
\email{\href{mailto:denissov@wisc.edu}{denissov@wisc.edu}}

\thanks{
This research was supported by   NSF grant DMS-2450716, the Simons Fellowship in Mathematics, the Simons Travel Support for Mathematicians Award, and the Van Vleck Professorship Research Award. 
}

\subjclass{}

\keywords{}

\maketitle

\setcounter{tocdepth}{3}


\section{ The $SU(1,1)$ and $SU(2)$ NLFT and main results.}

\medskip

This note is a companion to \cite{den26} where the $SU(1,1)$ nonlinear Fourier transform (NLFT) with general square-summable coefficients was studied and  we will  use some results and notation from that work. Denote $\N=\{1,2\ldots\}, \Z^+=\{0,1,2,\ldots\}$ and $\Z$ will stand for the set of all integers.
  Let $F=\{F_n\}|_{n\in \Z}$ be a double-infinite sequence of complex numbers and $|F_n|<1,\forall n\in \Z$. Taking $N\in \Z^+$, we define a compactly supported $F^{\langle N\rangle}$ by truncation: $F^{\langle N\rangle}_n=F_n\cdot \chi_{|n|\le N}, n\in \Z$, where $\chi_E$ denotes the characteristic function of a set $E$. Consider $2\times 2$ matrices $\widetilde X_n(z,F^{\langle N\rangle}), n\in \Z$ defined by the recursion
\begin{equation}\label{rec5}
\widetilde X_n= \frac{1}{(1-|F_n^{\langle N\rangle}|^2)^{\frac 12}}
\left(\begin{array}{cc}
1& \overline{F_n^{\langle N\rangle}}z^{-n}\\
F_n^{\langle N\rangle} z^n &1
\end{array}\right)\widetilde X_{n-1}, \quad z\in \C
\end{equation}
with $\widetilde X_n=I:=\left(\begin{smallmatrix}1&0\\0&1
\end{smallmatrix}
\right)$ for $n< -N$. One has $\widetilde X_n\in SU(1,1)$ for $z\in \T$ and a more careful study of this  recursion  gives (see p.5 in \cite{tt}) 
\[
\widetilde X_n(z,F^{\langle N\rangle})=:\left(\begin{array}{cc}
\frak{a}_n(z,F^{\langle N\rangle})& \frak{b}^{(*)}_n(z,F^{\langle N\rangle})\\
\frak{b}_n(z,F^{\langle N\rangle}) &\frak{a}^{(*)}_n(z,F^{\langle N\rangle})
\end{array}\right)\,,
\]
where \begin{equation}\label{oper1}
f^{(*)}(z):=\overline{f(\bar{z}^{-1})}.
\end{equation} 
Clearly, $f^{(*)}(z)=\overline{f(z)}$ if $z\in \T$. We also have
\begin{equation}\label{susu}
|\frak{a}_n|^2=|\frak{b}_n|^2+1, \, z\in \T\,.
\end{equation}
We put forward a definition: \smallskip

\noindent {\bf Definition.} {\it For $F^{\langle N\rangle}$, define the $SU(1,1)$ nonlinear Fourier transform (NLFT) on $\Z$ as the map 
\[
F^{\langle N\rangle}\mapsto \overbrace{F_{su(1,1)}^{\langle N\rangle}}:=\left(\begin{array}{c}
\frak{a}(z,F^{\langle N\rangle})\\
\frak{b}(z,F^{\langle N\rangle}) 
\end{array}\right)\,,
\]
where $\frak{a}(z,F^{\langle N\rangle}):=\frak{a}_\infty(z,F^{\langle N\rangle})$ and 
$\frak{b}(z,F^{\langle N\rangle})=\frak{b}_\infty(z,F^{\langle N\rangle})$.}
\smallskip

 In fact, since $F_n^{\langle N\rangle}=0$ for $n>N$,  we get 
\[
\left(\begin{array}{c}
\frak{a}(z,F^{\langle N\rangle})\\
\frak{b}(z,F^{\langle N\rangle}) 
\end{array}\right)=\left(\begin{array}{c}
\frak{a}_n(z,F^{\langle N\rangle})\\
\frak{b}_n(z,F^{\langle N\rangle}) 
\end{array}\right), \quad \forall n\ge N\,.
\]
We refer the reader to  \cite{tt,den26,kov,kov1,st,musc,Jelena} for additional references and discussion about the NLFT. 
Our first result is the following theorem.
\begin{theorem}\label{t1}Suppose $\{n_j\}|_{j=0}^\infty$ is a subsequence in $\N$ that satisfies 
\begin{equation}\label{spar}
\frac{n_{j+1}}{n_j}\ge q, \quad \forall j\in \Z^+, \quad n_0=1\,,
\end{equation}
where $q\ge 2$. Take $\{\delta_j\}\in \ell^2(\Z^+)$ such that $\|\left\{\delta_j\right\}\|_{\ell^\infty(\Z^+)}<1$. Let $F_{n_j}=\delta_j, \, \forall j\in \Z^+$ and set $F_n=0$ for all other $n\in \Z$. For any $p\in [2,\infty)$, the limit
 \[
 \lim_{N\to\infty}\overbrace{F_{su(1,1)}^{\langle N\rangle}}
 \]
 exists in $L^p(\T,m)$.

\end{theorem}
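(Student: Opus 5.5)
The plan is to expand $\frak a$ and $\frak b$ as multilinear series in the $\delta_j$ and to exploit lacunarity through an $L^p$ estimate for each homogeneous term.

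\textbf{Reduction to a linear product.} Since $F$ is supported on $\{n_j\}\subset\N$, for $n_J\le N<n_{J+1}$ the pair $(\frak a,\frak b)(z,F^{\langle N\rangle})$ equals $(a_J,b_J)$, obtained by applying $J+1$ factors. Put $c_j=(1-|\delta_j|^2)^{-1/2}$. Then
\[
a_J=c_J\big(a_{J-1}+\overline{\delta_J}z^{-n_J}b_{J-1}\big),\qquad b_J=c_J\big(\delta_J z^{n_J}a_{J-1}+b_{J-1}\big).
\]
Since $\{\delta_j\}\in\ell^2$ and $\sup_j|\delta_j|<1$, the product $\prod_j c_j$ converges. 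So I will drop the scalars $c_j$ and prove $L^p$ convergence of the unnormalized pair $(A_J,B_J)$, where $(a_J,b_J)=(\prod_{j\le J}c_j)\,(A_J,B_J)$.

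\textbf{Multilinear expansion.} Iterating the recursion gives
\[
A_J=\sum_{k\ \mathrm{even}}T_k^{(J)},\qquad B_J=\sum_{k\ \mathrm{odd}}T_k^{(J)}.
\]
Here $T_k^{(J)}$ is a sum over $0\le i_1<\dots<i_k\le J$ of monomials $\delta_{i_k}\overline{\delta_{i_{k-1}}}\cdots z^{\pm(n_{i_k}-n_{i_{k-1}}+\dots)}$, with alternating signs and conjugations. By \eqref{spar} with $q\ge 2$ and $n_0=1$, we have $\sum_{i<J}n_i\le 2n_{J-1}-1<n_J$. Consequently the frequency $n_{i_k}-n_{i_{k-1}}+\dots$ lies in $(n_{i_k}-n_{i_{k-1}},\,n_{i_k})$, and distinct index sets $i_1<\dots<i_k$ give distinct frequencies (the binary-expansion argument). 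Hence, by Plancherel,
\[
\|T_k^{(J)}\|_2^2=\sum_{i_1<\dots<i_k\le J}\prod_s|\delta_{i_s}|^2\le \frac{\big(\sum_j|\delta_j|^2\big)^k}{k!}.
\]
The differences $T_k^{(J')}-T_k^{(J)}$ for $J'>J$ are bounded the same way, with at least one index from the tail $(J,J']$. This gives the bound
\[
\frac{\big(\sum_{j>J}|\delta_j|^2\big)\big(\sum_j|\delta_j|^2\big)^{k-1}}{(k-1)!}.
\]

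\textbf{Passage to $L^p$ (main obstacle).} The key step is a Bonami/hypercontractive type inequality for these lacunary chaoses:
\[
\|T\|_p\le (C\sqrt{p})^{k}\|T\|_2
\]
for any polynomial $T$ spanned by the $k$-fold alternating products above. I would try to prove it by viewing $z^{n_j}$, $j\ge0$, as a lacunary system with ratio $\ge 2$. The route is to compare with independent Steinhaus variables via a Riesz-product/decoupling argument: split $\{n_j\}$ into finitely many subsequences with ratio $\ge q_0$ large, where lacunary characters are close to independent in $L^p$. Then apply the classical hypercontractive bound for Steinhaus chaoses, and recombine at a loss of $C^k$.

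\textbf{Conclusion.} Given this inequality,
\[
\sum_k\|T_k^{(J')}-T_k^{(J)}\|_p\le \Big(\sum_{j>J}|\delta_j|^2\Big)^{1/2}\sum_k \frac{(C\sqrt p)^k S^{(k-1)/2}}{\sqrt{(k-1)!}},
\]
where $S=\sum_j|\delta_j|^2$. The series converges, and the prefactor tends to $0$ as $J\to\infty$. So $(A_J,B_J)$ is Cauchy in $L^p(\T,m)$, and the theorem follows for every $p\in[2,\infty)$.

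If the hypercontractive step fails at $q=2$ because of resonances, a fallback is available. It uses the $p=2$ identity $\int_\T\log|\frak a|\,dm=-\tfrac12\sum_j\log(1-|\delta_j|^2)$ from \cite{den26}, together with \eqref{susu}, to get uniform bounds. It then combines these with the nearly disjoint frequency supports of the increments $\overline{\delta_J}z^{-n_J}B_{J-1}$, a Littlewood--Paley square function estimate, and Burkholder-type inequalities.
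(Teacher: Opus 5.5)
\textbf{Verdict: genuine gap in the central step, though it can be filled.} Your reduction, the chaos expansion, and the $L^2$ bounds are correct: $\|T_k^{(J)}\|_2^2$ is the $k$-th elementary symmetric function of the $|\delta_i|^2$, hence at most $S^k/k!$, and the tail version holds too. The final summation goes through as soon as $\|T\|_p\le C_p^k\|T\|_2$ holds on the degree-$k$ span for \emph{some} constant $C_p$; the $\sqrt p$ plays no role. The gap is that this inequality, which carries the entire proof, is not established, and the route you sketch for it does not work.

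Comparison with Steinhaus variables is legitimate for one subsequence whose ratio is large compared with $p$. In that case there are no relations $\sum\epsilon_jn_j=0$ with $0<\max|\epsilon_j|\le p$, and the moments coincide exactly. But the monomials of $T_k$ couple consecutive indices from different subsequences, e.g.\ $\delta_{j+1}\overline{\delta_j}z^{n_{j+1}-n_j}$ in $T_2$. Across subsequences nothing is close to independent: for $n_j=2^j$ one has $z^{n_{j+1}}=(z^{n_j})^2$. Sorting $k$-tuples by the residue classes of their entries costs only $\mu^k$, but each sorted piece is still a chaos in the original ratio-$2$ system. To reach the Steinhaus model you would need a decoupling inequality replacing the diagonal $\theta_0=\dots=\theta_{\mu-1}$ by independent variables, and none is available here. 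Your fallback does not repair this: the identity for $\int\log|\mathfrak{a}|\,dm$ gives no control of $L^p$ norms for $p>2$, and the square-function/Burkholder step is left unspecified.

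\textbf{How to close it.} The inequality is nevertheless true, and the Littlewood--Paley idea from your fallback proves it by induction on $k$. Write a degree-$k$ element as $T=\sum_i z^{\pm n_i}R_i$. Here $R_i$ is, up to conjugation, a degree-$(k-1)$ element using only indices $<i$, and $R_i$ is constant when $k=1$. The $i$-th summand has spectrum in $\pm[n_i-n_{i-1},n_i]\subset\pm[n_i/2,n_i]$, and for fixed $k$ these spectra are pairwise disjoint, so $\|T\|_2^2=\sum_i\|R_i\|_2^2$. The reverse Littlewood--Paley inequality for such lacunary blocks ($2\le p<\infty$), together with Minkowski's inequality in $L^{p/2}$, gives $\|T\|_p\le C_p\bigl(\sum_i\|R_i\|_p^2\bigr)^{1/2}$. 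The induction hypothesis then yields $\|T\|_p\le C_p^k\|T\|_2$.

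\textbf{Comparison with the paper.} With this fix, your argument is a complete proof and genuinely different from the paper's. The paper never compares with independent variables. It splits indices into $\mu$ residue classes only so that, in the binomial expansion of $(f^{(d)}_j)^\ell$, the leading term $(f^{(d)}_{j-1})^\ell$ is orthogonal to all the others. Every remaining term carries a factor $\Delta_j^2$ and is bounded crudely, by writing $\beta_{j\mu+d-1}$ through the partial sums $g^{(d')}_j,g^{(d')}_{j-1}$ of all classes. A discrete Gr\"onwall lemma then gives uniform $L^{2\ell}$ bounds, and interpolation covers the remaining $p$. Your version is more quantitative, giving $\|T_k\|_p\le C_p^kS^{k/2}/\sqrt{k!}$. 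It also treats every $p$ at once and needs no smallness of $\|\{\delta_j\}\|_{\ell^2}$.
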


\noindent {\bf Remark.}  Clearly, in our theorem, the sparse sequence $F$ can be taken infinite in both directions, e.g., $\{n_j\}=\{\pm 2^s\}, s\in \Z^+$ vs. the standard $n_j=2^j, j\in \Z^+$.
The pointwise convergence of $SU(1,1)$ NLFT with sparse $\ell^2$ data has been studied in \cite{Jelena}. In particular,  the a.e. convergence was established there. Later, we will comment on how that can be obtained from our argument.\medskip

\noindent {\bf Remark.} Taking $\delta_j>0$, $\{\delta_j\}\notin \ell^1(\Z^+)$ and $z=1$ in Theorem \ref{t1} shows that the $L^\infty(\T,m)$ norm of NLFT can grow infinitely.
\medskip

We make a few observations before starting with the proof of Theorem \ref{t1}. Without loss of generality, we can assume that $\|\{\delta_j\}\|_{\ell^2}\le \frac 12$. Consider $Y_n(z,F)$ that solves the recursion
\begin{equation}\label{rec8}
Y_n= 
\left(\begin{array}{cc}
1& \overline{F_n}z^{-n}\\
F_n z^n &1
\end{array}\right) Y_{n-1}, 
\end{equation}
with $Y_0=I$. Clearly, 
$\widetilde X_n=\prod_{j=0}^n(1-|F_j|^2)^{-\frac 12}Y_n$ for $n\le N$ and, therefore, it is enough to show that $\lim_{n\to\infty} Y_n(z,F)$ converges for a.e. $z\in \T$. We denote
\[
Y_n(z,F)=:\left(\begin{array}{cc}
\widetilde{\frak{a}}_n(z,F)& \widetilde{\frak{b}}^{(*)}_n(z,F)\\
\widetilde{\frak{b}}_n(z,F) &\widetilde{\frak{a}}^{(*)}_n(z,F)
\end{array}\right)\,.
\]
Notice that $Y_{n}=Y_{n-1}$ if $F_n=0$. We will show that $\lim_{n\to\infty}\widetilde{\frak{b}}_n$ and $\lim_{n\to\infty}\widetilde{\frak{a}}_n^{(*)}$ converge for a.e. $z\in \T$ but we need some notation first.
In what follows, $m$ will denote the normalized Lebesgue measure on the unit circle $\T$, i.e., $dm=(2\pi)^{-1}d\theta, \theta\in [0,2\pi)$. Also, for the function $f\in L^1(\T,m)$, we write $\widehat f_n=\int_\T fe^{- in\theta}dm, n\in \Z$ for the Fourier transform. Similarly, given a sequence $\{G_n\}$, we will write $\widecheck G=\sum_{n\in \Z} G_ne^{in\theta}$ for the inverse Fourier transform whenever the sum makes sense. 

\begin{lemma}Assume that $\|\{\delta_j\}\|_{\ell^2}\le \frac 12$, then
\[
\sup_n\|\widetilde{\frak{a}}_n^{(*)}(z,F)\|_{L^2(\T,m)}^2=1+O(\|\{\delta_j\}\|_{\ell^2}^2),\quad \sup_n\|\widetilde{\frak{b}}_n(z,F)\|_{L^2(\T,m)}^2=O(\|\{\delta_j\}\|_{\ell^2}^2)\,.
\]
\end{lemma}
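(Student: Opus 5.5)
The plan is to expand the entries of $Y_n$ as multilinear series in the data and to exploit lacunarity to control the overlap of the frequencies. From \eqref{rec8}, the second row of $Y_n$ satisfies
\[
\widetilde{\frak b}_n=F_nz^n\widetilde{\frak a}_{n-1}+\widetilde{\frak b}_{n-1},\qquad \widetilde{\frak a}^{(*)}_n=F_nz^n\widetilde{\frak b}^{(*)}_{n-1}+\widetilde{\frak a}^{(*)}_{n-1},
\]
with analogous formulas for the first row, and $Y_0=I$. Iterating gives
\[
\widetilde{\frak a}^{(*)}_n=1+\sum_{k\ \mathrm{even}\ \ge 2}T_k,\qquad \widetilde{\frak b}_n=\sum_{k\ \mathrm{odd}}T_k .
\]
Here $T_k$ is a sum over indices $j_1>j_2>\dots>j_k$ with $n_{j_1}\le n$ of the products $\delta_{j_1}\overline{\delta_{j_2}}\delta_{j_3}\cdots$ (conjugates alternating), each multiplied by $z^{n_{j_1}-n_{j_2}+n_{j_3}-\dots}$. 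Since the support of $F$ is finite up to any fixed $n$, these are finite sums, and every bound below will be uniform in $n$.

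The key combinatorial step is a multiplicity bound. Because $q\ge 2$, for any strictly decreasing chain the alternating sum $S=n_{j_1}-n_{j_2}+\dots$ satisfies $n_{j_1}/2\le n_{j_1}-n_{j_2}\le S\le n_{j_1}$. In particular $S\neq 0$ for $k\ge 1$. Moreover, a given frequency $S$ determines $j_1$ up to at most two choices, since $n_{j_1}\in[S,2S]$ and the $n_j$ at least double. Once $j_1$ is fixed, the tail $n_{j_2}-n_{j_3}+\dots=n_{j_1}-S$ is again an alternating sum of the same type. Induction then shows that each frequency is hit by at most $2^k$ chains of length $k$. By Cauchy--Schwarz on each Fourier coefficient,
\[
\|T_k\|_{L^2(\T,m)}^2\le 2^k\sum_{j_1>\dots>j_k}|\delta_{j_1}|^2\cdots|\delta_{j_k}|^2\le \frac{2^k\|\{\delta_j\}\|_{\ell^2}^{2k}}{k!}.
\]

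Summing over $k$ by Minkowski's inequality, with $\varepsilon=\|\{\delta_j\}\|_{\ell^2}\le\frac12$, gives
\[
\Big\|\sum_{k\ge1\ \mathrm{odd}}T_k\Big\|_{2}\le \sum_{k\ \mathrm{odd}}\frac{(\sqrt2\,\varepsilon)^k}{\sqrt{k!}}=O(\varepsilon).
\]
This yields $\sup_n\|\widetilde{\frak b}_n\|_2^2=O(\varepsilon^2)$. For $\widetilde{\frak a}^{(*)}_n$, all frequencies in $T_k$, $k\ge2$, are nonzero, so $\sum_{k\ge 2}T_k$ is orthogonal to the constant $1$. Hence
\[
\|\widetilde{\frak a}^{(*)}_n\|_2^2=1+\Big\|\sum_{k\ge2\ \mathrm{even}}T_k\Big\|_2^2=1+O(\varepsilon^4),
\]
which is stronger than the claimed $1+O(\varepsilon^2)$.

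The main obstacle is the multiplicity count. Frequencies of different tuples can genuinely collide: for example $4-2+1=3=4-1$, though these two chains have different lengths. It is therefore essential to work length by length and to use only a bound of the form $C^k$, which the factor $1/\sqrt{k!}$ easily absorbs, rather than to hope for exact orthogonality. I expect the bookkeeping at this step to need the most care. The endpoint cases, where $S=n_{j_1}/2$ or $S=n_{j_1}$, only affect the constant in the bound on the number of choices for $j_1$.
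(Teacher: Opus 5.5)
Your proof is correct, but it takes a different route from the paper's.

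\textbf{The paper's route.} The paper does not expand $Y_n$ into a multilinear series. It works with the one-step recursion \eqref{sd1} at the sparse times. The Fourier supports of $\alpha_{j-1}$ and $\beta_{j-1}$ lie in $[0,n_{j-1}-1]$ and $[1,n_{j-1}]$ (a support lemma quoted from Tao--Thiele). Since $n_j\ge 2n_{j-1}$, the two summands on each line of \eqref{sd1} are therefore orthogonal. This gives the exact identities $\|\alpha_j\|_2^2=\|\alpha_{j-1}\|_2^2+|\delta_j|^2\|\beta_{j-1}\|_2^2$ and $\|\beta_j\|_2^2=\|\beta_{j-1}\|_2^2+|\delta_j|^2\|\alpha_{j-1}\|_2^2$. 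Adding them yields $\|\alpha_j\|_2^2+\|\beta_j\|_2^2=\prod_{s\le j}(1+|\delta_s|^2)$, which the paper then iterates.

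\textbf{Relation to your expansion.} These identities say your expansion is exactly orthogonal within each entry. Your own count already shows this length by length. For $k\ge2$ the tail $n_{j_2}-n_{j_3}+\cdots$ is strictly positive and at most $n_{j_1}/2$, so $n_{j_1}\in(S,2S]$ and $j_1$ is unique. The multiplicity is therefore $1$, not $2^k$. The collision $4-2+1=4-1$ that you flag as the main obstacle pairs an odd chain with an even one. Those sit in different entries ($\beta$ versus $\alpha$), so it never matters.

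\textbf{What each route buys.} The paper's route is short and gives exact formulas. It also sets up the recursive, disjointly supported block structure that is reused later: the coefficients $x_s,y_s$ of Lemma \ref{suv} and the blocks $P_j,Q_j$ in the proof of Theorem \ref{t1}. Your route has three advantages.
\begin{itemize}
\item It is self-contained: you derive the frequency localization $S\in[n_{j_1}/2,n_{j_1}]$ rather than importing the support lemma.
\item It states the sharper bound $1+O(\varepsilon^4)$ for the $\alpha$-entry explicitly. The paper's identity also yields this once the bound on $\beta_{j-1}$ is inserted.
\item It tolerates bounded overlap. For any lacunary ratio $q>1$, the same argument gives $n_{j_1}\in[S,\tfrac{q}{q-1}S]$ and multiplicity at most $C(q)^k$, which $1/\sqrt{k!}$ still absorbs. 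So uniform $L^2$ bounds persist for $1<q<2$, where the orthogonality behind the paper's identity can fail.
\end{itemize}
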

\begin{proof}
Denote for shorthand $\alpha_j:=\widetilde{\frak{a}}^{(*)}_{n_j},\beta_j:=\widetilde{\frak{b}}_{n_j}$ for $j\in \Z^+$ and $\alpha_{-1}:=1,\beta_{-1}:=0$. Then, for $z\in \T$, we have
\begin{equation}\label{sd1}
\alpha_{j}=\alpha_{j-1}+{\delta}_j z^{n_j}\overline{\beta}_{j-1}, \quad \beta_{j}=\beta_{j-1}+\delta_j z^{n_j}\overline{\alpha}_{j-1}\,.
\end{equation}
Due to $\supp \widehat\alpha_{j-1}\subset [0,n_{j-1}-1]$ and $\supp \widehat\beta_{j-1}\subset [1,n_{j-1}]$ (see \cite{tt}, p.6, Lemma 2), we get the $L^2(\T,m)$-orthogonality of the terms due to \eqref{spar}. Then, for $j\in \Z^+$, we get
\begin{eqnarray}
\|\alpha_j\|_{L^2(\T,m)}^2=\|\alpha_{j-1}\|_{L^2(\T,m)}^2+|\delta_j|^2\|\beta_{j-1}\|_{L^2(\T,m)}^2, \quad \|\alpha_{-1}\|_{L^2(\T,m)}=1,\\
\|\beta_j\|_{L^2(\T,m)}^2=\|\beta_{j-1}\|_{L^2(\T,m)}^2+|\delta_j|^2\|\alpha_{j-1}\|_{L^2(\T,m)}^2, \quad \|\beta_{-1}\|_{L^2(\T,m)}=0\,.
\end{eqnarray}
Since $\{\delta_j\}\in \ell^2(\Z^+)$, we can add these two identities and iterate to get 
\[
\|\alpha_j\|_{L^2(\T,m)}^2+\|\beta_j\|_{L^2(\T,m)}^2=\prod_{s\le j}(1+|\delta_s|^2)\,.
\]
Hence, for $j\in \N$, we have
\begin{eqnarray}
\|\alpha_j\|_{L^2(\T,m)}^2= \|\alpha_{j-1}\|_{L^2(\T,m)}^2+O\left(|\delta_j|^2\prod_{s\le j-1}(1+|\delta_s|^2)\right), \quad \|\alpha_{-1}\|_{L^2(\T,m)}=1,\\
\|\beta_j\|_{L^2(\T,m)}^2= \|\beta_{j-1}\|_{L^2(\T,m)}^2+O\left(|\delta_j|^2\prod_{s\le j-1}(1+|\delta_s|^2)\right), \quad \|\beta_{-1}\|_{L^2(\T,m)}=0\,.
\end{eqnarray}
Iterating these formulas, we get  our result.
\end{proof}
It will help to work with $\alpha_j$ and $\beta_j$ on the Fourier side. 
For $j\in \Z^+$, we let $\Lambda_j=\supp \widehat\alpha_j$ and $\Sigma_j=\supp \widehat\beta_j$.  As we already noted before, 
\[
\Lambda_j\subset [0,n_j-1], \quad \Sigma_j\subset [1,n_j]\,.
\] We will need more notation: given $N\in \Z$ and a set $S\subset \Z$, we write $N-S$ as a shorthand for a set $\{N-s, s\in S\}$.  From \eqref{sd1}, we get for $j\in \N$:
\begin{equation}\label{le}
\Lambda_j\subset \Lambda_{j-1}\cup (n_j-\Sigma_{j-1}), \quad \Sigma_j\subset \Sigma_{j-1}\cup (n_j-\Lambda_{j-1})\,
\end{equation}
and $\Lambda_0=\{0\}$ and $\Sigma_0\subset \{1\}$. Clearly, the sets in the above unions are disjoint due to the sparseness of $\{n_j\}$.

\begin{lemma}\label{suv} If $\alpha_j=:\sum_{0\le s}x_s^{(j)}z^s, \beta_j=:\sum_{0\le s}y_s^{(j)}z^s$, then for fixed $s$ we get $x_s^{(j)}=x_s$ and $y_s^{(j)}=y_s$ for large enough $j$. Moreover, $x:=\{x_s\}\in \ell^2(\Z^+)$ and $y:=\{y_s\}\in \ell^2(\Z^+)$.
\end{lemma}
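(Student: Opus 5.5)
The plan is to read off the stabilization of Fourier coefficients directly from the recursion \eqref{sd1} and the support information \eqref{le}, and then obtain the $\ell^2$ bound from the previous lemma via Fatou's lemma.

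\emph{Stabilization.} By \eqref{sd1}, for $z\in\T$,
\[
\alpha_j-\alpha_{j-1}=\delta_j z^{n_j}\overline{\beta_{j-1}},\qquad \beta_j-\beta_{j-1}=\delta_j z^{n_j}\overline{\alpha_{j-1}}.
\]
Since $\supp\widehat\beta_{j-1}\subset[1,n_{j-1}]$, the function $z^{n_j}\overline{\beta_{j-1}}$ has Fourier support in $n_j-\Sigma_{j-1}\subset[n_j-n_{j-1},\,n_j-1]$. Similarly, $z^{n_j}\overline{\alpha_{j-1}}$ has support in $n_j-\Lambda_{j-1}\subset[n_j-n_{j-1}+1,\,n_j]$. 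By \eqref{spar} with $q\ge2$ we have $n_j-n_{j-1}\ge n_j/2\ge n_{j-1}$, so both increments live in frequencies $\ge n_j/2$.

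Fix $s\ge0$ and pick $J$ with $n_J>2s$; since $n_j\ge q^j\to\infty$, such $J$ exists. For every $j\ge J$, the increments $\alpha_j-\alpha_{j-1}$ and $\beta_j-\beta_{j-1}$ have no Fourier coefficient at frequency $s$. Hence $x_s^{(j)}=x_s^{(J-1)}$ and $y_s^{(j)}=y_s^{(J-1)}$ for all $j\ge J$. Call these common values $x_s$ and $y_s$; this proves the first claim. Here $\alpha_j,\beta_j$ are polynomials by the support facts quoted from \cite{tt}, so the coefficients are well defined.

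\emph{Square summability.} By Parseval, $\|\alpha_j\|^2_{L^2(\T,m)}=\sum_s|x_s^{(j)}|^2$, and the previous lemma bounds this uniformly in $j$ by $1+O(\|\{\delta_j\}\|^2_{\ell^2})$. For any finite $M$, choose $j$ so large that $x_s^{(j)}=x_s$ for all $s\le M$. Then
\[
\sum_{s\le M}|x_s|^2\le\sup_j\|\alpha_j\|^2_{L^2(\T,m)}<\infty .
\]
Letting $M\to\infty$ gives $x\in\ell^2(\Z^+)$. The same argument, using the second bound of the lemma, gives $y\in\ell^2(\Z^+)$ with $\|y\|^2_{\ell^2}=O(\|\{\delta_j\}\|^2_{\ell^2})$.

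\emph{Main point of care.} The only delicate step is the frequency bookkeeping. Complex conjugation on $\T$ reflects the Fourier support, and multiplication by $z^{n_j}$ then shifts it. One must check that the resulting set lies strictly above every previously fixed frequency, and this is exactly where the sparseness $q\ge2$ enters. Once that is done, the rest is routine Parseval and a truncation argument.
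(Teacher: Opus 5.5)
Your proof is correct and takes essentially the paper's route. The stabilization of the coefficients comes from the recursion \eqref{sd1} together with the support bookkeeping \eqref{le}: the increments at step $j$ live in frequencies $\ge n_j-n_{j-1}\to\infty$. Square-summability follows from the uniform $L^2$ bounds of the previous lemma via Parseval and truncation; you simply spell out details the paper leaves implicit.
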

\begin{proof}The first claim follows from   \eqref{sd1} and \eqref{le} which show that $\{x_s\}$ and $\{y_s\}$ are defined recursively. The second  claim follows  from the previous lemma.
\end{proof}

The following result was proved in \cite{Jelena}. It also comes as a corollary from our arguments above.

\begin{corollary} \label{c0}Given assumptions of Theorem \ref{t1}, the limit
 \begin{equation}\lim_{N\to\infty}\overbrace{F_{su(1,1)}^{\langle N\rangle}}\label{consa}
 \end{equation}
 exists for a.e. $z\in \T$.
\end{corollary}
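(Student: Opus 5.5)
By the reduction made before Lemma 1.2, it suffices to show that $\alpha_j=\widetilde{\frak a}^{(*)}_{n_j}$ and $\beta_j=\widetilde{\frak b}_{n_j}$ converge for a.e. $z\in\T$ as $j\to\infty$. Indeed, $Y_n$ is constant for $n_j\le n<n_{j+1}$, and $\prod_j(1-|\delta_j|^2)^{-1/2}$ converges because $\{\delta_j\}\in\ell^2$. The other two entries of $Y_n$ are obtained from $\alpha_j,\beta_j$ by the operation $(*)$, which on $\T$ is just complex conjugation. So once $\alpha_j$ and $\beta_j$ converge a.e., the whole matrix converges a.e., and hence so does $\overbrace{F_{su(1,1)}^{\langle N\rangle}}$.

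The plan is to identify the limits with the $L^2$ functions $\alpha:=\sum_s x_s z^s$ and $\beta:=\sum_s y_s z^s$ supplied by Lemma~\ref{suv}, and to show that $\alpha_j$ is a lacunary-type partial sum of the Fourier series of $\alpha$. Iterating \eqref{le}, the disjointness of the unions gives
\[
\Lambda_j=\Lambda_{j-1}\sqcup(\text{a subset of }(n_{j-1},n_j-1]).
\]
The new piece lies in $n_j-\Sigma_{j-1}\subset[n_j-n_{j-1},n_j-1]$, and since $q\ge2$ we have $n_j-n_{j-1}\ge n_{j-1}$, so it sits above $\Lambda_{j-1}\subset[0,n_{j-1}-1]$. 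Moreover, \eqref{sd1} shows that the coefficients of $\alpha_{j-1}$ are not altered at step $j$: the added term $\delta_jz^{n_j}\overline{\beta}_{j-1}$ has spectrum disjoint from, and above, that of $\alpha_{j-1}$. Hence $x^{(j)}_s=x_s$ for all $s\le n_j-1$, so $\alpha_j=S_{n_j-1}\alpha$ is exactly the partial Fourier sum of $\alpha$ at frequency $n_j-1$. The same reasoning gives $\beta_j=S_{n_j}\beta$.

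The key step is then a.e. convergence of the lacunary partial sums $S_{n_j}f$ for $f\in L^2(\T)$ and a lacunary sequence $n_{j+1}/n_j\ge q>1$. This is classical (Kolmogorov) and avoids Carleson's theorem. One compares $S_{n_j}f$ with the de la Vall\'ee Poussin or Fej\'er-type means, which converge a.e., and controls the difference by the square function $\sum_j|S_{n_j}f-\sigma_{n_j}f|^2$. Its $L^2$ norm is bounded by $C_q\|f\|_2^2$, because for lacunary $n_j$ the blocks $(n_j/q,n_j]$ overlap boundedly. Alternatively, we can use the $L^2$ boundedness of the lacunary maximal partial-sum operator $\sup_j|S_{n_j}f|$ together with density of trigonometric polynomials. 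Applying this to $f=\alpha$ and to $f=\beta$ gives a.e. convergence of $\alpha_j\to\alpha$ and $\beta_j\to\beta$.

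The main obstacle is the exact identification $\alpha_j=S_{n_j-1}\alpha$, that is, checking carefully that each step only appends higher frequencies and never modifies lower ones. This is where the support bounds from \cite{tt} and the condition $q\ge 2$ are essential. Once that is in place, the rest is the classical lacunary theorem plus the bookkeeping with the scalar product.
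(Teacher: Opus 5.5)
Your argument is correct. It shares the paper's skeleton: reduce to $\alpha_j,\beta_j$, identify them as partial sums of the $\ell^2$ series from Lemma~\ref{suv}, then apply an a.e.\ convergence theorem. Where it differs is the key theorem.

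The paper invokes Carleson's theorem \cite{carles}. That theorem gives a.e.\ convergence of the \emph{full} sequence of partial sums of any $L^2$ Fourier series, so it does not matter which partial sums actually occur.

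You instead pin down exactly which ones occur: $\alpha_j=S_{n_j-1}\alpha$ and $\beta_j=S_{n_j}\beta$. This uses $q\ge 2$ to see that step $k$ only appends frequencies $\ge n_k-n_{k-1}\ge n_{k-1}$ and never changes lower coefficients. You then observe that these indices form a lacunary sequence, so Kolmogorov's classical theorem on lacunary partial sums of $L^2$ functions suffices.

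Your route makes the corollary essentially elementary. The tools are a.e.\ convergence of Fejér means plus the estimate
\[
\int\sum_j|S_{n_j}f-\sigma_{n_j}f|^2\,dm\le C_q\|f\|_{L^2}^2,
\]
which follows from $\sum_{j:\,n_j\ge|m|}m^2/(n_j+1)^2\le C_q$. As a by-product you get $\|\sup_j|\alpha_j|\|_{L^2}+\|\sup_j|\beta_j|\|_{L^2}\lesssim_q\|x\|_{\ell^2}+\|y\|_{\ell^2}$. The paper's route is a one-liner, but it leans on a much deeper theorem and leaves the partial-sum identification implicit.

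Two small slips, neither affecting the argument:
\begin{itemize}
\item The new frequencies at step $j$ lie in $[n_{j-1},n_j-1]$, not $(n_{j-1},n_j-1]$. The endpoint $n_{j-1}$ can occur when $n_j=2n_{j-1}$. Your next sentence, which is what you actually use, is correct.
\item The constant $C_q\|f\|_2^2$ bounds the $L^1$ norm of the square function $\sum_j|S_{n_j}f-\sigma_{n_j}f|^2$ (equivalently, its square root in $L^2$), not its $L^2$ norm.
\end{itemize}
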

\begin{proof}
 By the Carleson theorem on convergence of Fourier series \cite{carles}, the series
\[
\sum_{s=0}^\infty x_sz^s, \quad \sum_{s=0}^\infty y_sz^s
\]
converge for a.e. $z\in \T$. Since $\{\widetilde{\frak{a}}^{(*)}_n\}$ and $\{\widetilde{\frak{b}}_n\}$ are partial sums of these series, we get \eqref{consa}.
\end{proof}
\noindent {\bf Remark.} In \cite{Jelena}, it is proved that, provided that $q\ge 3$, the convergence of NLFT on a set of positive Lebesgue measure implies that the sparse NLFT data must be in $\ell^2(\Z)$.\medskip

 Let $\sigma$ be a probability measure on $\T$ with the infinite support (in the sense of cardinality). Denote the monic orthogonal polynomials by $\{\Phi_n(z,\sigma)\}$ and orthonormal polynomials by $\{\phi_n(z,\sigma)\}$, we use OPUC for both as shorthand. That is, 
\begin{eqnarray*}
\int_\T \Phi_n(z,\sigma)z^{-j}d\sigma=0, \quad \int_\T \phi_n(z,\sigma)z^{-j}d\sigma=0, \quad \forall j\in \{0,\ldots, n-1\},\quad\int_\T |\phi_n(z,\sigma)|^2d\sigma=1, 
\\ \deg (\Phi_n)=\deg(\phi_n)=n, \quad 
\text{coeff}_n(\Phi_n)=1, \quad \text{coeff}_n(\phi_n)>0\,,
\end{eqnarray*}
where $\text{coeff}_n(Q)$ denotes the $n$-th coefficient of the polynomial $Q$. For any polynomial $Q$, we let $Q^*:=z^n\overline{Q(\bar{z}^{-1})}$. Notice that such $\ast$-operation is different from the $(\ast)$-operation in \eqref{oper1} and it depends on $n\in \Z^+$. In fact, $Q^*=z^nQ^{(*)}$. Let $\D$ denote the open unit disc in $\C$.
The polynomials $\{\Phi_n(z,\sigma)\}$ satisfy a recursion (\cite{bs}, see (1.5.10) on p. 56)
\[
\Phi_{n+1}=z\Phi_n-\overline{\gamma_n}\Phi_n^*, \, \Phi_0=1, \, n\in \Z^+\,,
\]
where the coefficients $\{\gamma_n\}|_{n=0}^\infty\in \D^\infty$ are called recursion (aka Schur or Verblunsky) parameters. It is known that the set of measures $\sigma$ and the set of sequences $\{\gamma_n\}|_{n=0}^\infty\in \D^\infty$ are bijectively equivalent.
We get the following two results.

 \begin{corollary}\label{c1} Suppose $\{n_j\}$ and $\{\delta_j\}$ are taken as in the previous theorem. If the recurrence coefficients $\{\gamma_n(\sigma)\}$ of measure $\sigma$ satisfy $\gamma_{n_j}=\delta_j, \forall j\in \N$ and $\gamma_n=0$ for all other $n\in \Z^+$, then the limits $\lim_{n\to\infty}\phi_n^*(z,\sigma)$ and $\lim_{n\to\infty}\Phi_n^*(z,\sigma)$ exist for a.e. $z\in \T$.
 \end{corollary}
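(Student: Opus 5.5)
Corollary \ref{c1} will follow by translating the Szegő recursion into the $SU(1,1)$ transfer-matrix recursion \eqref{rec8} and then using Corollary \ref{c0}, or more directly Lemma \ref{suv} together with Carleson's theorem.

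\textbf{Step 1: matrix form of the Szegő recursion.} I write the Szegő recursion in matrix form. Combining $\Phi_{n+1}=z\Phi_n-\overline{\gamma_n}\Phi_n^*$ with its $*$-version $\Phi_{n+1}^*=\Phi_n^*-\gamma_n z\Phi_n$ gives
\[
\left(\begin{array}{c}\Phi_{n+1}\\ \Phi_{n+1}^*\end{array}\right)=\left(\begin{array}{cc} z & -\overline{\gamma_n}\\ -\gamma_n z & 1\end{array}\right)\left(\begin{array}{c}\Phi_{n}\\ \Phi_{n}^*\end{array}\right).
\]
I then conjugate by powers of $z$ to remove the factor $z$ in front of $\Phi_n$. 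Set $u_n:=z^{-n}\Phi_n=\Phi_n^{(*)}{}^{(*)}$-type normalized quantities. Then, for $z\in\T$,
\[
u_{n+1}=u_n-\overline{\gamma_n}z^{-n-1}\Phi_n^*, \qquad \Phi_{n+1}^*=\Phi_n^*-\gamma_n z^{n+1}u_n .
\]
This is exactly the recursion \eqref{rec8} with data $F_{n+1}=-\gamma_n$, up to the harmless index shift and sign. For $\gamma$ supported on $\{n_j\}$, this $F$ is supported on $\{n_j+1\}$.

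\textbf{Step 2: checking the sparseness hypothesis.} The shifted sequence $\{n_j+1\}$ is still lacunary, with ratio at least roughly $q$ or slightly smaller. To stay literally inside the hypotheses, I will instead redo the argument of the first Lemma and Lemma \ref{suv} directly for $\Phi_n^*$ and $z^{-n}\Phi_n$, which are the entries $\alpha_j$, $\beta_j$ in the new indexing. The only facts used there are:
\begin{itemize}
\item the Fourier support bounds $\supp\widehat{\Phi_n^*}\subset[0,n]$, together with the corresponding bound for $\Phi_n$;
\item disjointness of the supports of the two terms in each update.
\end{itemize}
Disjointness holds because $n_{j+1}\ge 2n_j$: the new term $\gamma z^{n_{j}+1}\overline{\cdots}$ occupies frequencies in $[n_j+1-n_{j-1}-1,\,n_j+1]$, which misses $[0,n_{j-1}+1]$ once $n_j\ge 2n_{j-1}$, apart from possibly small initial indices that can be handled by hand. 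Note that $\gamma_0$ is unconstrained in the statement, since only $\gamma_{n_j}$ for $j\in\N$ are prescribed. A nonzero $\gamma_0$ changes only finitely many initial steps and is absorbed into the initial condition.

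\textbf{Step 3: stabilization of coefficients and a.e. convergence.} As in Lemma \ref{suv}, the Taylor coefficients of $\Phi_{n}^*$ stabilize as $n\to\infty$. Indeed, $\Phi_n^*$ changes only at $n=n_j+1$, and the increment lives on frequencies $\ge n_j-n_{j-1}$, which tend to infinity. By the orthogonality computation of the first Lemma, the limiting coefficient sequence lies in $\ell^2$, with
\[
\|\Phi_n^*\|_{L^2}^2\le\prod_j(1+|\gamma_{n_j}|^2).
\]
Moreover, $\Phi_{n_j+1}^*$ is a partial sum of the limiting series. Carleson's theorem \cite{carles} then gives a.e. convergence of $\Phi_n^*$.

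Finally, $\phi_n^*=\Phi_n^*\prod_{k<n}(1-|\gamma_k|^2)^{-1/2}$, and the product converges because $\{\delta_j\}\in\ell^2$ and $\sup_j|\delta_j|<1$. Hence $\phi_n^*$ converges a.e. as well.

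\textbf{Main obstacle.} The only delicate point is the bookkeeping of Fourier supports after the index shift, that is, confirming that the two summands in each step remain disjoint so that the limiting coefficients are well defined. Everything else is a repetition of the proofs of Lemma \ref{suv} and Corollary \ref{c0}.
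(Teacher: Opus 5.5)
Your overall strategy is the paper's: recast the Szeg\H{o} recursion as \eqref{rec8} with $F_{n+1}=-\gamma_n$, stabilize the Taylor coefficients, and apply Carleson. The paper itself just invokes the NLFT/OPUC dictionary together with Corollary \ref{c0}.

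The problem is Step 2, where you absorb the index shift by running the argument directly on $\Phi_n^*$.

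First, $\Phi_n^*$ and $z^{-n}\Phi_n$ are not the entries $\alpha_j,\beta_j$ of the paper's Lemma. In fact $(z^{-n}\Phi_n,\Phi_n^*)^T=Y_n(1,1)^T$, so $\Phi_n^*=\widetilde{\frak{a}}^{(*)}_n+\widetilde{\frak{b}}_n$. Since $z^{-n}\Phi_n=\overline{\Phi_n^*}$ on $\T$, you are really running the scalar recursion $\Phi^*_{n_j+1}=\Phi^*_{n_{j-1}+1}-\delta_j z^{n_j+1}\overline{\Phi^*_{n_{j-1}+1}}$. Its Fourier support is the union of those of $\widetilde{\frak{a}}^{(*)}$ and $\widetilde{\frak{b}}$.

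Second, the disjointness you claim is false. The interval $[n_j-n_{j-1},n_j+1]$ meets $[0,n_{j-1}+1]$ whenever $n_j\le 2n_{j-1}+1$, and the overlap is genuine. Take $n_1=3$, $n_2=7$ (admissible with $q=2$) and $\delta_j=0$ for $j\ge 3$. Then $\Phi_4^*=1-\delta_1z^4$ and $\Phi_8^*=1+(\bar\delta_1\delta_2-\delta_1)z^4-\delta_2z^8$. Consequently:
\begin{itemize}
\item the update is not orthogonal to $\Phi_4^*$;
\item $\Phi_4^*=\Phi^*_{n_1+1}$ is not a partial sum of the limiting series, since its $z^4$ coefficient changes later;
\item for $\delta_1=i\epsilon$, $\delta_2=\epsilon$ one gets $\|\Phi_8^*\|_{L^2(\T,m)}^2=(1+\epsilon^2)^2+2\epsilon^3$, which contradicts your bound $\|\Phi_n^*\|_{L^2}^2\le\prod_j(1+|\gamma_{n_j}|^2)$.
\end{itemize}
With $n_j=2^{j+1}-1$ this overlap occurs at every step, not only at small indices. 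So neither the $\ell^2$ bound via orthogonality nor the partial-sum property that you feed into Carleson's theorem is established.

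The conclusion is still true, and the repair is short: keep the two entries separate. The data $F_{n_j+1}=-\delta_j$, $j\in\N$, live in $[n_1+1,\infty)$. Induction on \eqref{sd1} then gives:
\begin{itemize}
\item $\widetilde{\frak{a}}^{(*)}_{n_j+1}$ has Fourier support in $[0,n_j-n_1]$;
\item $\widetilde{\frak{b}}_{n_j+1}$ has Fourier support in $[n_1+1,n_j+1]$.
\end{itemize}
The $j$-th updates sit in $[n_j-n_{j-1},n_j-n_1]$ and $[n_j-n_{j-1}+n_1+1,n_j+1]$ respectively. These lie strictly above the previous supports as soon as $n_j\ge 2n_{j-1}$. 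Hence the paper's first Lemma, Lemma \ref{suv} and Corollary \ref{c0} apply to each entry, and $\Phi_n^*$ converges a.e.\ as a sum of two a.e.\ convergent sequences.

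Equivalently, conjugation by $\mathrm{diag}(z^{1/2},z^{-1/2})$ moves the data from $n_j+1$ to $n_j$, leaving $\frak{a}$ unchanged and multiplying $\frak{b}$ by $z^{-1}$. Corollary \ref{c0} then applies verbatim, with $\delta_0$ replaced by $0$ and $\delta_j$ by $-\delta_j$. This is the clean way to make precise the paper's one-line appeal to the NLFT/OPUC dictionary.

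This shift uses $\gamma_0=0$. Contrary to your remark, $\gamma_0=0$ is prescribed by the statement, since $0$ is one of the ``other'' $n\in\Z^+$. Your final passage from $\Phi_n^*$ to $\phi_n^*$ is fine.
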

\begin{proof} The proof is immediate if one uses the connection between $SU(1,1)$ NLFT and the OPUC (\cite{den26}, Section 3 or \cite{tt}, Lecture 5).\end{proof}
 
 Next, we will discuss the $SU(2)$ NLFT. Take any sequence of complex numbers $G=\{G_n\}|_{n\in \Z}$. For any $N\in \Z^+$, consider $2\times 2$ matrices $Z_n(z,G^{\langle N\rangle}), n\in \Z$ defined by recursion
\begin{equation}\label{rec6}
 Z_n= \frac{1}{(1+|G_n^{\langle N\rangle}|^2)^{\frac 12}}
\left(\begin{array}{cc}
1& -\overline{G_n^{\langle N\rangle}}z^{-n}\\
G_n^{\langle N\rangle} z^n &1
\end{array}\right)Z_{n-1}, 
\end{equation}
with $Z_n=I:=\left(\begin{smallmatrix}1&0\\0&1
\end{smallmatrix}
\right)$ for $n< -N$. We have $Z_n\in SU(2)$ for $z\in \T$. The matrix $Z_n$ takes the form (we use the same notation as for the $SU(1,1)$ case but these are different quantities):
\[
Z_n(z,G^{\langle N\rangle})=:\left(\begin{array}{cc}
\frak{a}_n(z,G^{\langle N\rangle})& -\frak{b}^{(*)}_n(z,G^{\langle N\rangle})\\
\frak{b}_n(z,G^{\langle N\rangle}) &\frak{a}^{(*)}_n(z,G^{\langle N\rangle})
\end{array}\right)\,.
\]
Following \cite{AMT24,gev,amt}, we give a definition: \smallskip

\noindent {\bf Definition.} {\it For $G^{\langle N\rangle}$, define $SU(2)$ nonlinear Fourier transform (NLFT) on $\Z$ as the map 
\[
G^{\langle N\rangle}\mapsto \overbrace{G_{su(2)}^{\langle N\rangle}}:=\left(\begin{array}{c}
\frak{a}(z,G^{\langle N\rangle})\\
\frak{b}(z,G^{\langle N\rangle}) 
\end{array}\right)\,,
\]
where $\frak{a}(z,G^{\langle N\rangle}):=\frak{a}_\infty(z,G^{\langle N\rangle})$ and 
$\frak{b}(z,G^{\langle N\rangle})=\frak{b}_\infty(z,G^{\langle N\rangle})$.}
\smallskip
The proof of the following result repeats the proof of Corollary \ref{c0} word for word.
\begin{corollary}\label{t6}Suppose $\{n_j\}|_{j=0}^\infty$ is a subsequence of natural numbers that satisfies 
\begin{equation}\label{spar5}
\frac{n_{j+1}}{n_j}\ge q, \quad \forall j\in \Z^+, \, n_0=1\,,
\end{equation}
where $q\ge 2$. Take $\{\delta_n\}\in \ell^2(\Z^+)$ and let $G_{n_j}=\delta_j, \, \forall j\in \Z^+$ and set $G_n=0$ for all other $n\in \Z$. Then,
\[
 \,\,\overbrace{G_{su(2)}^{\langle N\rangle}}
\text{converges for a.e. $z\in \T$ when $N\to\infty$.}
\]
\end{corollary}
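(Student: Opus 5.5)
We follow the proof of Corollary \ref{c0}, replacing the $SU(1,1)$ recursion by the $SU(2)$ one. Note that $\{\delta_j\}$ need not be small here and $|\delta_j|<1$ is not required; $1+|G_n|^2>0$ always. First we remove the scalar normalization. Let $W_n$ solve
\[
W_n=\left(\begin{array}{cc}1&-\overline{G_n}z^{-n}\\ G_nz^n&1\end{array}\right)W_{n-1},\quad W_0=I .
\]
Since $G_n=0$ for $n\le 0$, we have $Z_n=\prod_{j=0}^n(1+|G_j|^2)^{-1/2}W_n$ for $n\le N$. The product converges to a positive limit because $\{\delta_j\}\in\ell^2$. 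It therefore suffices to prove a.e. convergence of the entries of $W_{n_j}$ as $j\to\infty$, since $W_n$ is constant between consecutive $n_j$.

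Next we set up the recursion for the entries. Write $\alpha_j,\beta_j$ for the entries of $W_{n_j}$ in the positions of $\frak{a}^{(*)}$ and $\frak{b}$, with $\alpha_{-1}=1$, $\beta_{-1}=0$. On $\T$ they satisfy
\[
\alpha_j=\alpha_{j-1}-\overline{\delta_j}z^{-n_j}\overline{\beta_{j-1}}\,\text{-type term},\qquad \beta_j=\beta_{j-1}+\delta_jz^{n_j}\overline{\alpha_{j-1}},
\]
which matches \eqref{sd1} up to signs and conjugations. As in \cite{tt}, Lemma 2, we have $\supp\widehat\alpha_{j-1}\subset[0,n_{j-1}-1]$ and $\supp\widehat\beta_{j-1}\subset[1,n_{j-1}]$. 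The inclusions \eqref{le} hold for the spectra $\Lambda_j,\Sigma_j$, and by \eqref{spar} with $q\ge 2$ the two pieces in each union are disjoint.

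From this we read off $L^2$ control and stabilization of coefficients. Orthogonality gives
\[
\|\alpha_j\|_2^2+\|\beta_j\|_2^2=(1+|\delta_j|^2)\left(\|\alpha_{j-1}\|_2^2+\|\beta_{j-1}\|_2^2\right).
\]
Hence $\sup_j(\|\alpha_j\|_2^2+\|\beta_j\|_2^2)\le\prod_s(1+|\delta_s|^2)<\infty$. This is the analogue of the first lemma, and here it does not even require smallness. Because the new terms at step $j$ live on Fourier frequencies disjoint from the old ones, each Taylor coefficient of $\alpha_j$ and $\beta_j$ is eventually constant. This is Lemma \ref{suv} verbatim, and it yields $x,y\in\ell^2(\Z^+)$ with $\alpha_j,\beta_j$ equal to partial sums $\sum_{s\le n_j}x_sz^s$ and $\sum_{s\le n_j}y_sz^s$. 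Carleson's theorem \cite{carles} then gives a.e. convergence of $\alpha_j$ and $\beta_j$. On $\T$ the remaining entries are $\overline{\alpha_j}$ and $\overline{\beta_j}$, up to sign, so they also converge a.e., and the normalized $Z_n$ converges a.e.

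The only step requiring care is the support claim: that $\widehat\alpha_j$ and $\widehat\beta_j$ stay in the stated intervals under the $SU(2)$ sign convention. This follows by the same induction as in \cite{tt}, because the sign change does not affect which exponents appear. The rest is a literal repetition of the argument for Corollary \ref{c0}.
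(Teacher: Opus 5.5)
Your proof is correct and follows the paper's argument: the paper only says that the proof of Corollary \ref{c0} carries over word for word, and you carry it out — remove the scalar normalization, use sparse-support orthogonality to get a uniform $L^2$ bound and stabilizing coefficients $x,y\in\ell^2(\Z^+)$, then apply Carleson's theorem to the partial sums. One slip to fix: the recursion for the entry in the $\frak{a}^{(*)}$ position is $\alpha_j=\alpha_{j-1}-\delta_j z^{n_j}\overline{\beta_{j-1}}$, coming from the entry $G_nz^n$ acting on $-\frak{b}^{(*)}_{n-1}$, not a term with $\overline{\delta_j}z^{-n_j}$, which would sit on negative frequencies and contradict the support claim you rely on; with the correct formula everything goes through as you say (note also $\alpha_j=\sum_{s\le n_j-1}x_sz^s$, since its spectrum lies in $[0,n_j-1]$).
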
\bigskip

Now, we are ready to prove Theorem \ref{t1}.

\noindent {\it Proof of Theorem \ref{t1}.}  
It  will be sufficient to establish the asymptotics of matrices $Y_n$. By interpolation, it is enough to consider $p=2\ell, \ell\in \N$ where the case $\ell=1$ follows from our arguments above. To explain the idea better, we start by taking $\ell=2$ which gives $p=4$. \smallskip
 
 Recall our definitions of $\alpha_j$ and $\beta_j$, and the coefficients $\{x_s\}$ and $\{y_s\}$ from Lemma \ref{suv}. We can partition the sequences $x$ and $y$ into blocks using the rapid growth of $n_j$ and write
 \[
 x:=\{x_s\}=\sum_{j\ge 0} P_j, \quad y:=\{y_s\}=\sum_{j\ge 0} Q_j\,,
 \]
where, see \eqref{sd1},  $\widecheck P_0=1, \widecheck Q_0=0$ and, for $j\in \N$, we have 
\begin{equation}\label{perh}
\widecheck P_j:=\delta_jz^{n_j}\overline{\beta}_{j-1}, \quad \widecheck Q_j:=\delta_jz^{n_j}\overline{\alpha}_{j-1}.
\end{equation} Moreover,  $\supp P_{j_1}\cap \supp P_{j_2}=\emptyset$ and $\supp Q_{j_1}\cap \supp Q_{j_2}=\emptyset$ if $j_1\neq j_2$. 
Hence, $\alpha_j=\widetilde{\frak{a}}^{(*)}_{n_j}=\sum_{s\le j} \widecheck{P}_s$ and $\beta_j=\widetilde{\frak{b}}_{n_j}=\sum_{s\le j} \widecheck{Q}_s$. We have
 $
 \|\widecheck P_j\|_{L^q(\T,m)}=|\delta_j|\|\beta_{j-1}\|_{L^q(\T,m)}, \, \|\widecheck Q_j\|_{L^q(\T,m)}=|\delta_j|\|\alpha_{j-1}\|_{L^q(\T,m)}
 $
 for every $q\ge 1$.  Take $\mu\in \N$ as a large parameter to be specified later. Define \[x^{(d)}:=\sum_{j\ge 0} P^{(d)}_j,\quad d\in \{0,\ldots,\mu-1\}, \quad P^{(d)}_j:=P_{j\mu+d},\]
 \[y^{(d)}:=\sum_{j\ge 0} Q^{(d)}_j,\quad d\in \{0,\ldots,\mu-1\}, \quad Q^{(d)}_j:=Q_{j\mu+d},\]
 and
 \[
\Delta_j:=\left(\sum_{|s-j\mu|\le 3\mu}|\delta_s|^2\right)^{\tfrac 12}.\]
Notice that $\sum_j\Delta_j^2\lesssim \mu \sum_{j}|\delta_j|^2$. 
We  write the sums
 \begin{equation}\label{pred}
f_j^{(d)}:= \sum_{s\le j}\widecheck P^{(d)}_s=f_{j-1}^{(d)}+\widecheck P^{(d)}_j, \quad g_j^{(d)}:=\sum_{s\le j}\widecheck Q^{(d)}_s=g_{j-1}^{(d)}+\widecheck Q^{(d)}_j
 \end{equation}
 so, for $d\in \{1,\ldots,\mu-1\}$,
 \begin{equation}\label{list1}
 \sum_{s\le \mu j+d-1}\widecheck Q_j=g_j^{(0)}+\ldots+g_j^{(d-1)}+g_{j-1}^{(d)}+\ldots+g_{j-1}^{(\mu-1)}
 \end{equation}
 and, for $d=0$,
 \begin{equation}\label{list2}
 \sum_{s\le \mu j-1}\widecheck Q_s=g_{j-1}^{(0)}+\ldots+g_{j-1}^{(\mu-1)}\,.
 \end{equation}
 Analogous formulas hold for the sums of $\widecheck P_j$. Notice that
 \begin{equation}\label{triangle}
 \sup_{j}\|\alpha_j\|_{L^q(\T,m)}\le \sum_{d=0}^{\mu-1}\sup_j\|f_j^{(d)}\|_{L^q(\T,m)}, \quad \sup_{j}\|\beta_j\|_{L^q(\T,m)}\le \sum_{d=0}^{\mu-1}\sup_j\|g_j^{(d)}\|_{L^q(\T,m)}
 \end{equation}
 by the triangle inequality. Our goal is to estimate $\|f_j^{(d)}\|_{L^4(\T,m)}$ and $\|f_j^{(d)}\|_{L^4(\T,m)}$.
To this end, we take the square of both sides in \eqref{pred}:
 \begin{eqnarray}\label{poy1}
 (f^{(d)}_j)^2=(f_{j-1}^{(d)})^2+(\widecheck{P}^{(d)}_j)^2+2f_{j-1}^{(d)}\widecheck{P}^{(d)}_j=:I_1+I_2+I_3\,,\\
 (g^{(d)}_j)^2=(g_{j-1}^{(d)})^2+(\widecheck{Q}^{(d)}_j)^2+2g_{j-1}^{(d)}\widecheck{Q}^{(d)}_j=:J_1+J_2+J_3\,.
 \end{eqnarray}
 Notice that $\supp \widehat I_1\subset [0,2n_{(j-1)\mu+d}]$ and $\supp \widehat I_2\subset [2(n_{j\mu+d}-n_{j\mu+d-1}),2n_{j\mu+d}]$ and $\supp \widehat I_3\subset [(n_{j\mu+d}-n_{j\mu+d-1},2n_{j\mu+d}] $.
 Now, we choose $\mu$ large enough to make sure that $I_1\perp I_2$ and $I_1\perp I_3$. This is possible due to \eqref{spar}. Similarly, we can guarantee that $J_1\perp J_2$ and $J_1\perp J_3$ by taking $\mu$ large.  Therefore, 
 \begin{equation}\label{cop1}
 \int |f^{(d)}_j|^4dm=\int |f_{j-1}^{(d)}|^4dm+\int |\widecheck{P}^{(d)}_j|^4dm +O\left(\int |f^{(d)}_{j-1}\widecheck{P}^{(d)}_j|^2dm\right)
 +O\left(\int |f^{(d)}_{j-1}|\cdot |\widecheck{P}^{(d)}_j|^3dm\right)\,.
 \end{equation}
 Now, we can use recursion \eqref{perh} to estimate the second, third, and the fourth terms as follows. If $d\in \{1,\ldots,\mu-1\}$, we use Young's inequality: $ab\le a^p/p+b^q/q, a,b\ge 0, p^{-1}+q^{-1}=1, p\in (1,\infty)$, along with \eqref{list1} to get
 \begin{eqnarray}
 \int |\widecheck{P}^{(d)}_j|^4dm\lesssim |\Delta_{j}|^4\int |g_j^{(0)}+\ldots+g^{(d-1)}_{j}+g_{j-1}^{(d)}+\ldots+g_{j-1}^{(\mu-1)}|^4dm, \\ \int |f^{(d)}_{j-1}\widecheck{P}^{(d)}_j|^2dm\lesssim \hspace{7cm}\\|\Delta_j|^2\left( \int |g_j^{(0)}+\ldots+g^{(d-1)}_{j}+g_{j-1}^{(d)}+\ldots+g_{j-1}^{(\mu-1)}|^4dm+ \int |f^{(d)}_{j-1}|^4dm\right),\\
 \int |f^{(d)}_{j-1}|\cdot |\widecheck{P}^{(d)}_j|^3dm\lesssim\hspace{7cm}  \\|\Delta_j|^3\left( \int | g_j^{(0)}+\ldots+g^{(d-1)}_{j}+g_{j-1}^{(d)}+\ldots+g_{j-1}^{(\mu-1)}  |^4dm+ \int |f^{(d)}_{j-1}|^4dm\right)
 \end{eqnarray}
 and, for $d=0$, we use \eqref{list2} to get
 \begin{eqnarray}
 \int |\widecheck{P}^{(0)}_{j}|^4dm\lesssim |\Delta_j|^4\int | g_{j-1}^{(0)}+\ldots+g^{(\mu-1)}_{j-1}  |^4dm, \\ \int |f^{(0)}_{j-1}\widecheck{P}^{(0)}_j|^2dm\lesssim |\Delta_j|^2\left( \int |  g_{j-1}^{(0)}+\ldots+g^{(\mu-1)}_{j-1}  |^4dm+ \int |f^{(0)}_{j-1}|^4dm\right),\\
  \int |f^{(0)}_{j-1}|\cdot |\widecheck{P}^{(0)}_j|^3dm\lesssim  |\Delta_j|^3\left( \int | g_{j-1}^{(0)}+\ldots+g^{(\mu-1)}_{j-1}   |^4dm+ \int |f^{(0)}_{j-1}|^4dm\right)\,.
 \end{eqnarray}
 Analogous bounds hold for $\{g^{(d)}_j\}$. Then, summing up in $d$ in \eqref{cop1}, we have
 \[
 \sum_{d=0}^{\mu-1}\int |f_j^{(d)}|^4dm\le  \left(\sum_{d=0}^{\mu-1}\int |f_{j-1}^{(d)}|^4dm\right)\left(1+C\Delta_j^2\right)+C_\mu\Delta_j^2\sum_{d=0}^{\mu-1}
  \int (|g^{(d)}_{j}|^4+|g^{(d)}_{j-1}|^4)dm
 \]
 and, similarly, one gets
 \[ \sum_{d=0}^{\mu-1}\int |g_j^{(d)}|^4dm\le  \left(\sum_{d=0}^{\mu-1}\int |g_{j-1}^{(d)}|^4dm\right)\left(1+C\Delta_j^2\right)+C_\mu\Delta_j^2\sum_{d=0}^{\mu-1}
  \int (|f^{(d)}_{j}|^4+|f^{(d)}_{j-1}|^4)dm\,.
\]
 We now add up these two bounds, use inequality $\sum_j \Delta_j^2<\infty$ and Lemma 4.1 from \cite{den26} to get
 \begin{equation}\label{cop2}
\sup_n \left(\sum_{d=0}^{\mu-1}\int |f_n^{(d)}|^4dm+\sum_{d=0}^{\mu-1}\int |g_n^{(d)}|^4dm\right)<\infty
 \end{equation}
 and, by \eqref{triangle}, $\sup_n \|\widetilde{\frak{a}}_n^{(*)}\|_{L^4(\T,m)}<\infty$ and $\sup_n \|\widetilde{\frak{b}}_n\|_{L^4(\T,m)}<\infty$. If we denote $\widetilde{\frak{a}}^{(*)}=\widecheck x$ and $\widetilde{\frak{b}}=\widecheck y$, the convergence $\|\widetilde{\frak{a}}_n^{(*)}-\widetilde{\frak{a}}^{(*)}\|_{L^4(\T,m)}\to 0$ and $\|\widetilde{\frak{b}}_n^{(*)}-\widetilde{\frak{b}}^{(*)}\|_{L^4(\T,m)}\to 0$ can be proved similarly using the Cauchy criterion and estimates on the tails of the series $\sum_j \widecheck P_j$ and $\sum_j \widecheck Q_j$. \smallskip

 For $\ell>2$, the argument is the same. Instead of \eqref{poy1}, we get
 \begin{equation}\label{poy2}
 (f^{(d)}_j)^{\ell}=(f_{j-1}^{(d)})^\ell+\ldots+(\widecheck{P}^{(d)}_j)^\ell\,,\quad  (g^{(d)}_j)^{\ell}=(g_{j-1}^{(d)})^\ell+\ldots+(\widecheck{P}^{(d)}_j)^\ell
 \end{equation}
 by Newton's binomial formula. Choosing  $\mu$ large enough, we can again  make sure that the first terms in both sums are orthogonal to all others. Then, taking  the square of the absolute values and using Young's inequality, one can obtain the required bounds just like in \eqref{cop1}-\eqref{cop2}. \qed

 \begin{theorem} Under the assumptions of  Corollary \ref{c1}, it holds that the orthogonality measure $\sigma$ satisfies $d\sigma=wdm$ and $w\in L^p(\T)$ with an arbitrary $p\in [2,\infty)$. Moreover, the sequences $\{\phi_n^*\}$ and $\{ \Phi_n^*\}$ converge in $L^p(\T,m)$.
 \end{theorem}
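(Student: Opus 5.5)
Via the standard dictionary between the $SU(1,1)$ NLFT and OPUC (\cite{den26}, Section 3 or \cite{tt}, Lecture 5), we identify $\Phi_n^*$ with $\widetilde{\frak{a}}^{(*)}_n$ (up to conjugation/reflection conventions and the index shift $F_n\leftrightarrow \gamma_{n}$). More precisely, the Szeg\H{o} recursion $\Phi_{n+1}^*=\Phi_n^*-\gamma_n z\Phi_n$, written together with its partner for $\Phi_{n+1}$, is exactly the unnormalized recursion \eqref{rec8} for $Y_n$ with $F_n=-\gamma_n$ (or $\overline{\gamma_n}$, depending on convention). So $\Phi^*_{n}$ and $z^{-n}\Phi_n$ are the entries of $Y_n$. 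The normalized versions satisfy $\phi_n^*=\Phi_n^*\prod_{k<n}(1-|\gamma_k|^2)^{-1/2}$, and this product converges since $\{\delta_j\}\in\ell^2$. The proof of Theorem \ref{t1} gives $\sup_n\|\widetilde{\frak{a}}_n^{(*)}\|_{L^p}<\infty$ and $L^p$ convergence of $\widetilde{\frak{a}}^{(*)}_n$ for every $p\in[2,\infty)$. Hence $\Phi_n^*\to\Pi$ and $\phi_n^*\to \Pi_1$ in $L^p(\T,m)$ for all $p<\infty$. By Corollary \ref{c1}, the convergence also holds a.e.

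To identify $\sigma$, I use the Bernstein--Szeg\H{o} approximations. Let $\sigma_n$ be the measure with $d\sigma_n=|\phi_n^*|^{-2}dm$ on $\T$. It has the same first $n$ Verblunsky coefficients as $\sigma$ and vanishing higher ones, so $\sigma_n\to\sigma$ weak-$*$. I want the limit to be $|\Pi_1|^{-2}dm$, and for this I need control of $|\phi_n^*|^{-2}$, i.e.\ lower bounds on $|\phi_n^*|$. This is the main obstacle, since $L^p$ bounds on $\phi_n^*$ give only upper bounds.

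The route I would try first avoids inverting. On $\T$ one has the $SU(1,1)$ identity \eqref{susu}, which in OPUC language is the Christoffel--Darboux type identity
\[
|\phi_n^*|^2-|\phi_n|^2=\ \text{(positive normalization)}\cdot 1 ,
\]
or equivalently $|\frak a|^2-|\frak b|^2=1$. The relevant formula is the known representation of the Carathéodory function
\[
F_\sigma=\lim \frac{\psi_n^*}{\phi_n^*},\qquad \text{with } \mathrm{Re}\,\frac{\psi_n^*}{\phi_n^*}=\frac{1}{|\phi_n^*|^2}\ \text{on } \T,
\]
where $\psi_n$ are the second-kind polynomials (Verblunsky coefficients $-\gamma_n$). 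This gives $d\sigma_n=\mathrm{Re}(\psi_n^*/\phi_n^*)\,dm$, and the combination $\frac12(\phi_n^*\overline{\psi_n^*}+\overline{\phi_n^*}\psi_n^*)/|\phi_n^*|^2$ is the standard way to rewrite it. I would use instead the dual formula
\[
w_n:=|\phi_n^*|^{-2}=\frac{1}{\tfrac14|\phi_n^*+\psi_n^*|^2 - \tfrac14|\phi_n^*-\psi_n^*|^2}\cdot(\ldots),
\]
i.e.\ write the density through the entries of the transfer matrix $\frac12\begin{pmatrix}\phi^*+\psi^*&\ldots\end{pmatrix}$. The cleanest approach is to pass to the $SU(1,1)$ picture: $w=\frac{1}{|\frak a+\frak b|^2}$ or $w=|\frak a+\frak b|^{-2}$-type formulas. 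Since $|\frak a|\ge1$ and $|\frak a|^2-|\frak b|^2=1$, we have $|\frak a-\frak b|\,|\frak a+\frak b|\ge |\frak a|^2-|\frak b|^2=1$. This gives $w_n\le |\frak a_n-\frak b_n|^2\lesssim |\frak a_n|^2+|\frak b_n|^2$ for the appropriate sign choice.

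Thus $w_n$ is dominated by a quantity uniformly bounded in $L^{p/2}$ for all $p$, hence uniformly in every $L^r$, $r<\infty$. By Corollary \ref{c1}, $w_n$ converges a.e. to $|\Pi_1|^{-2}$, which is finite a.e.\ because $|\frak a|\ge 1$ in the limit. The uniform $L^r$ bounds then give uniform integrability, so $w_n\to w$ in $L^1$ and indeed in every $L^r$ by Vitali. Therefore $\sigma=\lim\sigma_n=w\,dm$ with $w\in L^p$ for all $p\in[2,\infty)$. In particular $\sigma$ has no singular part, and the identification of the limit of $\phi_n^*$ with the Szeg\H{o} function $D^{-1}$ follows.
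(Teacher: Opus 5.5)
Your proof works, but the dictionary in your first paragraph is wrong and should be corrected. With $F_{n+1}=-\gamma_n$, the Szeg\H{o} recursion gives $(z^{-n}\Phi_n,\Phi_n^*)^{T}=Y_n\,(1,1)^{T}$ (note $Y_0=I$ while $\Phi_0=\Phi_0^*=1$). So $\Phi_n^*=\widetilde{\frak a}^{(*)}_n+\widetilde{\frak b}_n$ is a row sum of $Y_n$, not an entry, and after normalization $\phi_n^*=\frak a^{(*)}_n+\frak b_n$ and $\psi_n^*=\frak a^{(*)}_n-\frak b_n$. Your identification cannot hold: on $\T$ one has $|\Phi_n^*|=|\Phi_n|$, whereas $|\widetilde{\frak a}_n|^2-|\widetilde{\frak b}_n|^2=\prod_k(1-|F_k|^2)>0$. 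The same slip produces your claimed identity $|\phi_n^*|^2-|\phi_n|^2=\mathrm{const}>0$ on $\T$; the left side actually vanishes there, and the correct OPUC form of \eqref{susu} is $\mathrm{Re}(\phi_n^*\overline{\psi_n^*})=1$. No harm results, though. Theorem~\ref{t1} gives $L^p$ convergence of both $\widetilde{\frak a}^{(*)}_n$ and $\widetilde{\frak b}_n$, hence of $\Phi_n^*$ and $\phi_n^*$. Your later formula $w_n=|\frak a+\frak b|^{-2}$ (with $\frak a$ read as $\frak a^{(*)}_n$) is also the right one. Drop the displayed ``dual formula'': its denominator is just $\mathrm{Re}(\phi_n^*\overline{\psi_n^*})=1$.

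With this fixed, your proof and the paper's share a skeleton: Bernstein--Szeg\H{o} densities dominated pointwise by squares of NLFT-controlled quantities. They differ in both steps.

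For the domination, the paper uses Khrushchev's Wall-polynomial formula $\sigma_j'=(1-|r_j|^2)/|1-zr_j|^2$ together with $|B_{j-1}|^{-2}\lesssim 1-|r_j|^2$ to get $\sigma_j'\lesssim|B_{j-1}|^2$. It then transfers the $L^p$ bounds from $\frak a_j,\frak b_j$ to $A_j,B_j$. You instead get $w_n\le|\psi_n^*|^2\le 2(|\frak a_n|^2+|\frak b_n|^2)$ directly from $\mathrm{Re}\bigl((\frak a^{(*)}_n+\frak b_n)\overline{(\frak a^{(*)}_n-\frak b_n)}\bigr)=|\frak a_n|^2-|\frak b_n|^2=1$. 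This is more self-contained.

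For the limit, the paper only needs $\sup_j\|\sigma_j'\|_{L^p}<\infty$, $\sigma_j\to\sigma$ weak-$*$, and weak compactness of $L^p$-balls. That is shorter and uses no pointwise information. You use a.e.\ convergence plus Vitali; the a.e.\ convergence can come from Corollary~\ref{c1}, or merely from an a.e.\ convergent subsequence of the $L^p$-convergent $\phi_n^*$. This costs a bit more but yields more: $w_n\to w$ in every $L^r$, $r<\infty$, and $w=|\lim_n\phi_n^*|^{-2}$.

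One point in that step needs repair. Finiteness of the a.e.\ limit follows from Fatou and the uniform $L^r$ bound. The reason you give, $|\frak a|\ge 1$, does not by itself exclude $\frak a+\frak b=0$; the correct pointwise reason is $|\frak a+\frak b|\ge|\frak a-\frak b|^{-1}$.
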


 \begin{proof}
 Assume without loss of generality that $\|\{\delta_j\}\|_{\ell^2}\le \frac 12$. Following \cite{khr}, we define the Wall polynomials associated with $\sigma$ by $A_j$ and $B_j$  and consider Bernstein-Szeg\H{o} approximations
 \[
 d\sigma_j=\sigma_j'dm=\frac{1}{|\phi_{j}|^2}dm, \quad r_j:=\frac{A_{j-1}}{B_{j-1}}\,.
 \]
 Then, by the formula (6.3) in \cite{khr}, we get
 \[
 \sigma_j'=\frac{1-|r_j|^2}{|1-zr_j|^2}\le \frac{1+|r_j|}{1-|r_j|}=\frac{(1+|r_j|)^2}{1-|r_j|^2}\stackrel{\text{(4.14) in \cite{khr}}}{\lesssim}|B_{j-1}|^2(1+|r_j|)^2\lesssim |B_{j-1}|^2\,.
 \]
 The connection between $\frak{a}_j,\frak{b}_j$ and $\phi_j^*,A_j,B_j$, already used in \cite{den26} in Section 3, shows the $L^p(\T,m)$ estimates hold for $A_j,B_j$ as well, i.e., $\sup_j\|A_j\|_{L^p(\T,m)}<\infty$ and $\sup_j\|B_j\|_{L^p(\T,m)}<\infty$ for all $p\in [2,\infty)$.
 Hence, $\sup_j\|\sigma_j'\|_{L^p(\T,m)}<\infty$ for every $p\in [1,\infty)$. Since $\sigma_j\stackrel{(*)}{\to}\sigma$ (see, e.g., Lemma 5.4 in \cite{khr}), we get our first statement by using the weak compactness of balls in $L^p(\T,m)$. The second claim follows from the previous theorem and the relation between the $SU(1,1)$ NLFT and the OPUC.
 \end{proof}
\bigskip\bigskip

\bibliographystyle{plain} 
\bibliography{bibfile}
\end{document}